\newtheorem{mythm}{Theorem}[section]
\newtheorem{myprop}[mythm]{Proposition}
\newtheorem{mydefn}[mythm]{Definition}
\newtheorem{myrem}[mythm]{Remark}}
\newcommand{\ra}{\rightarrow}
\newcommand{\dis}{\displaystyle}
\def\R{\mathbb R}
\def\N{\mathbb N}
\def\C{\mathscr C}
\def\B{\mathscr B}
\def\F{\mathscr F}
\def\d{\text{\rm{d}}}
\def\E{\mathbb E}
\def\p{\mathbb P}
\def\e{\text{\rm{e}}}
\def\La{\Lambda}
\def\veps{\varepsilon}
\def\ll{\mathscr{L}}
\def\S{\mathcal S}
\def\C{\mathscr C}
\def\pb{\mathscr{P}}
\def\wt{\widetilde}
\newcommand{\fin}{\hfill $\square$\par}
\newenvironment{proof}{{\noindent\it Proof.}\ }{\hfill $\square$\par}
\numberwithin{equation}{section}
\begin{document}

\title{The existence of optimal control for continuous-time Markov decision processes in random environments\footnote{Supported in
 part by NNSFs of China (No. 11771327,  11831014)}}

\author{Jinghai Shao\thanks{Center for Applied Mathematics, Tianjin University, Tianjin 300072, China. Email: shaojh@tju.edu.cn.} and Kun Zhao \thanks{Center for Applied Mathematics, Tianjin University, Tianjin 300072, China.}
}

\maketitle

\begin{abstract}
  In this work, we investigate the optimal control problem for continuous-time Markov decision processes with the random impact of the environment. We provide conditions to show the existence of optimal controls under finite-horizon criteria. Under appropriate conditions, the value function is continuous and satisfies the dynamic programming principle. These results are established by introducing some restriction on the regularity of the optimal controls and by developing a new compactification method for continuous-time Markov decision processes, which is originally used to solve the optimal control problem for jump-diffusion processes.
\end{abstract}

\noindent AMS subject Classification (2010):\ 90C40, 93E20, 60J27, 60K37

\noindent \textbf{Keywords}: Markov decision process, finite-horizon criterion, Regime-switching diffusions, relaxed control, randomized policy

\section{Introduction}
Continuous-time Markov decision processes (CTMDPs) have been extensively studied and widely applied in various application fields such as telecommunication, queueing systems, population processes, epidemiology, and so on.  See, for instance, the monographs \cite{GH, PH}, the works
\cite{Guo07, GH03, Guo15, GR, GVZ, KS, Mil, PZ} and references therein.  As an illustrative example, we consider the controlled queueing systems.  In a single-server queueing system, jobs or customers arrive, enter the queue, wait for service, receive service, and then leave the system. A decision-maker can control the system by deciding which jobs to be admitted to the queue, by  increasing or decreasing the arrival rates or service rates in order to maximize the reward or minimize the cost of this system. There are many researches on CTMDPs under various optimality criteria. For example, the expected discounted, average and the finite-horizon optimality criteria have been well studied in \cite{GH, PH} and \cite{Guo15, Mil, Yu} amongst others.

However, in realistic applications, the cost of raw materials or the price of products depends on not only the number of jobs or customers but also the prices of raw materials or  products.
In this work, we shall extend the classical CTMDPs to make these models more realistic by including the random effect of the market. A diffusion process on $\R^d$ is included to model the price process whose coefficients may be dependent on the continuous-time Markov chain. A decision-maker still controls the system by deciding the transition rate of the Markov chain, but the optimality criterion depends on both the diffusion process and the Markov chain. The coexistence of Markov chains and   diffusion processes makes the optimality problem more difficult.  The well developed methods in the study of CTMDPs such as in \cite{GH} and \cite{Guo07, Guo15} do not work anymore.
For instance,  to deal with the infinite horizon expected discounted reward, it is quite crucial to establish the optimality equation based on the recursion approximation of the Laplace transform for the continuous-time Markov chain; see \cite[Theorem 4.6]{GH} and
\cite[p.121-122]{And}. Nevertheless, the appearance of the second order differential operators associated with the diffusion process makes it harder to first establish the optimality equation and then to show the existence of the optimal control.

In this work, we develop a compactification method to provide some sufficient conditions on the existence of optimal controls. This kind of compactification method is usually used to study the optimal control problem for jump-diffusion processes, and
has been well studied by many works including \cite{Chow, DM, HL90, HS95, HS95b, KS, Ku}. See \cite{HS95} for a complete list of references on the subject. In order to deal with CTMDPs in a random environment, we introduce $\psi$-relaxed controls as the class of admissible controls. The function $\psi$ is used to characterize the regularity of the optimal controls. The class of $\psi$-relaxed controls contains all randomized stationary policies in some sense (see Section 2 for details). The randomized stationary policies have been extensively investigated in the study of CTMDPs; see for example the monograph \cite{GH}.  The basic idea of our method is similar to that of Haussmann and Suo \cite{HS95}, but there are some essential differences on the measurability of the control policies. In \cite{HS95}, the controllers are assumed to have no information on the state of the studied system, so the admissible control policies are all adapted to some given $\sigma$-fields. However, to deal with CTMDPs, the control policy must be adapted to the $\sigma$-fields generated by the Markov chain in order to keep the Markovian property of the studied system.
Therefore, the key difficulty of this work is to show that the jumping process remains to be a Markov chain under all admissible controls in current situation. Besides, concrete techniques raised in this work are also different to those in \cite{HS95}. This can be reflected by the fact that this work can treat the terminal cost, however,  \cite{HS95} cannot (cf. \cite[Remark 2.2]{HS95}).

To be more precise, consider a Markov chain $(\La_t)$ on a denumerable state space $\S$ associated with the transition rate $q$-pair $(q(\theta,A;u),q(\theta;u))$, where $\theta\in\S$, $A\in \B(\S)$, $u\in U$, and the action set $U$ is a compact subset of $\R^k$. Let us consider further a diffusion process $(X_t)$ satisfying the following stochastic differential equation (SDE):
\begin{equation}\label{1.1}
\d X_t=b(X_t,\La_t)\d t+\sigma(X_t,\La_t)\d B_t,
\end{equation} where $b:\R^d\times \S\ra \R^d$, $\sigma:\R^d\times \S\ra \R^{d\times d}$, and $(B_t)$ is a standard $d$-dimensional Brownian motion. The process $(X_t)$ is used to model the price of raw materials or products, which is related not only to the randomness of the market characterized by the Brownian motion, but also to the number of jobs or the customers characterized by the Markov chain $(\La_t)$. Relaxed controls, known also as randomized policies, are considered in this paper. The following finite-horizon criterion is used:
\[\E\Big[\int_0^T f(t,X_t,\La_t,\mu_t)\d t+g(X_T,\La_T)\Big],\]
where $f:[0,T]\times \R^d\times\S\times U\to \R$ and $g:\R^d\times\S\to \R$ stand for the cost functions. Here and in the remainder of this paper, a measurable function $h:U\ra \R$ is extended into a function on $\pb(U)$, the collection of all probability measures on $U$, through:
\[h(\mu):=\int_{U}h(u)\mu(\d u),\quad \mu\in \pb(U),\]
whenever the integral is well defined.

Our contribution of this paper consists of two aspects: one is to include the random impact of the environment into the cost/reward function to provide more realistic models than classical CTMDPs in applications; another is to propose a new method to study the existence of optimal controls for CTMDPs, which generalizes the method of
\cite{HL90, HS95, HS95b, Ku} in the setting of Markov chains. This method can also be generalized to deal with the history-dependent control problem investigated in
\cite{Guo15}, where the existence of optimal history-dependent control was left open.  Moreover, the concept of $\psi$-relaxed control proposed in this work is of interest by itself, which is closely related to the well studied randomized policy (cf. e.g. \cite{GH}). See the subsection 2.1 below for the details.

This work is organized as follows: To focus on the development of compactification method in \cite{HS95, Ku} from the setting of diffusion processes to that of CTMDPs, we consider in Section 2 only the optimal control problem for classical CTMDPs without any random impact of the environment.
In Section 3 we treat CTMDPs in a random environment, and show the existence of the optimal control under appropriate conditions.

\section{Optimal Markov control for CTMDPs}
In this part we aim to develop the compactification method in \cite{HS95, Ku} from the setting of jump-diffusion processes to the setting of CTMDPs. To focus on this development and simplify the representation, we do not consider the impact of random environment in this section.  We introduce the concept of $\psi$-relaxed control to ensure the Markovian property of the studied system, and discuss its  connection with the classical randomized control policies studied, for instance, in \cite{GH,Guo15,GVZ}. In short, the class of $\psi$-relaxed controls is a subset of general randomized control policies in some sense, but contains all the randomized stationary policies and deterministic stationary policies. Randomized or deterministic stationary policies are two important kinds of policies having been extensively studied in  \cite{Guo07, GH03,GH, Guo15, GVZ, PZ, PH} amongst others. In these works, many obtained optimal control policies are all stationary.

\subsection{Formulation and Assumptions}

Let $(\Omega,\mathscr F,\p)$ be a probability space with the filtration $\{\mathscr F_t\}_{t\geq0}$. $\{\mathscr F_t\}$ satisfies the usual condition, that is, $\F_t$  is right-continuous and $\mathscr F_0$ contains all the $\p$-negligible events in $\F$.
Let $\S$ be a countable state space.
Let $U\subset \R^k$ be a compact set, and $\pb(U)$ the collection of all probability measures over $U$. On $\pb(U)$, define the $L_1$-Wasserstein distance between two probability measures $\mu$ and $\nu$ by:
\begin{equation}\label{wass}
W_1(\mu,\nu)=\inf\Big\{\int_{U\!\times \!U}\!|x-y|\pi(\d x,\d y); \,\pi\in \C(\mu,\nu)\Big\},
\end{equation}
where $\C(\mu,\nu)$ stands for the collection of all probability measures on $U\times U$ with marginal $\mu$ and $\nu$ respectively. Since $U$ is compact, and hence is bounded, the weak topology of $\pb(U)$ is equivalent to the topology induced by the $L_1$-Wasserstein distance. Also, this implies that $(\pb(U),W_1)$ is a compact Polish space (cf. \cite[Chapter 7]{AGS}).
We focus on  the finite-horizon optimal control problem in this work, so let us fix a time $T>0$ throughout this work.

Let $\S$ be a denumerable state space endowed with discrete topology.
Given $u\in U$, we call  $(q(\theta;u),q(\theta, A;u))$ $(\theta\in \S,\,A\in \B(\S))$ a \emph{$q$-pair}, if for each $A\in\B(S)$, $\theta\mapsto q(\theta; u)$ and $\theta\mapsto q(\theta, A;u)$ are measurable; and for each $\theta\in \S$, $A\mapsto q(\theta,A;u)$ is a measure on $\S$, $q(\theta,\{\theta\};u)=0$, $q(\theta,\S;u)\leq q(\theta;u)$. Moreover, it is called \emph{conservative} if $q(\theta;u)=q(\theta,\S;u)$ for all $\theta\in\S$. A function $h:\S\ra [0,\infty)$ is called a compact function if for every $\alpha>0$, the set $\{\theta\in\S; h(\theta)\leq \alpha\}$ is compact.

In the following we collect the hypotheses used in this section:
\begin{itemize}
  \item[(H1)] $U\subset \R^k$ is  a compact set for some $k\in \N$.
  \item[(H2)] For each $u\in U$, $(q(\theta;u),q(\theta,A;u))$ is a conservative $q$-pair on $\S$. Moreover, $M:=\sup_{u\in U}\sup_{\theta\in \S} q(\theta, \S;u)<\infty$.
  \item[(H3)] For every $\theta\in\S$ and $A\in \B(\S)$, the function $u\mapsto q(\theta,A;u)$ is continuous on $U$. For every $A\in \B(\S)$, $u\in U$, the function $\theta\mapsto q(\theta, A;u)$ is continuous.
  \item[(H4)] There exist a compact function $\Phi:\S\to [1,\infty)$, a compact set $B_0\in \mathscr B(\S)$, constants $\lambda>0$ and $\kappa_0<\infty$ such that
      \begin{equation*}
      \begin{split}
      Q_u\Phi(\theta)&:=\int_{\S}q(\theta,\d \gamma;u)\Phi(\gamma)-q(\theta;u)\Phi(\theta)\\
      &\leq \lambda \Phi(\theta)+\kappa_0\mathbf 1_{B_0}(\theta), \qquad \theta\in \S,\ u\in U.
      \end{split}
      \end{equation*}
\end{itemize}

\begin{myrem} The boundedness of $q(\theta,\S;u)$  in $\mathrm{(H2)}$ ensures that the jumping process $(\La_t)$ owns almost surely finite number of jumping in every finite time interval. As an initiative investigation to include the random effect of the environment to the theory of CTMDPs, we impose simply the bounded condition $\mathrm{(H2)}$ of the transition rates. In the study of CTMDPs, there are some works to deal with unbounded transition rates.  For example, in \cite{Guo15},  the authors used a technique of approximations from bounded transition rates to unbounded ones to establish the existence of optimal Markovian controls.  $\mathrm{(H4)}$ is called a drift condition, which is used to guarantee the non-explosion of the process $(\La_t)$ and to prove the tightness of the distributions of the Markov chains.
\end{myrem}

Let $\psi:[0,T]\ra [0,\infty)$ be an increasing function such that
\begin{equation}\label{psi}
\lim_{r\ra 0}\psi(r)=0.
\end{equation}
Consider the space $\mathcal{D}([0,T];\pb(U))$ of measurable maps from $[0,T]$ to the Polish space $(\pb(U),W_1)$ that are right-continuous with left-limits. Endow $\mathcal{D}([0,T];\pb(U))$ with the Skorokhod topology, which makes $\mathcal{D}([0,T];\pb(U))$ a Polish space; see \cite{Bill}.
For $\mu:[0,T]\ra \pb(U)$ in $\mathcal{D}([0,T];\pb(U))$,
put
\[w_\mu([a,b))=\sup\{W_1(\mu_t,\mu_s);\ s,t\in [a,b)\}, \quad a,b\in [0,T], a<b.\]
To describe compact sets in $\mathcal{D}([0,T];\pb(U))$, let us introduce the function
\begin{equation}\label{w2}
w_\mu''(\delta)=\sup \min\big\{ W_1(\mu_t,\mu_{t_1}),W_1(\mu_t,\mu_{t_2})\big\},
\end{equation}
where the supremum is taken over $t_1$, $t$, and $t_2$ satisfying
\[t_1\leq t\leq t_2,\qquad t_2-t_1\leq \delta.\]

\begin{mydefn}\label{def-1}
A $\psi$-relaxed control is a term $\alpha=(\Omega,\F,\F_t,\p,\La_t,\mu_t,s,\theta)$ satisfying:
\begin{itemize}
  \item[$\mathrm{(1)}$] $(s,\theta)\in [0,T]\times\S$;
  \item[$\mathrm{(2)}$] $(\Omega,\F,\p)$ is a probability space with the filtration $\{\F_t\}_{t\in [0,T]}$;
  \item[$\mathrm{(3)}$] $\mu_t\in \pb(U)$ is adapted to the $\sigma$-field generated by $\La_t$,
       $t\mapsto \mu_t$ is in $\mathcal{D}([0,T];\pb(U))$ almost surely, and for every $\theta'\in \S$ the curve $t\mapsto\nu_t(\,\cdot,\theta'):=\mu_t(\, \cdot\, |\La_t=\theta')$ satisfies
      \[w_{\nu}([t_1,t_2))\leq \psi( t_2-t_1),\quad 0\leq t_1<t_2\leq T;\]
  \item[$\mathrm{(4)}$] $(\La_t)_{t\in [s,T]}$ is an $\F_t$-adapted, jumping process with $\La_s=\theta$ and satisfies
   \begin{equation}\label{MD-1}
   \p(\La_{t+\delta}\in A|\La_t=\theta,\mu_t=\mu)-\mathbf 1_{A}(\theta)=\big(q(\theta,A;\mu)-q(\theta;\mu)\mathbf 1_A(\theta)\big)\delta+o(\delta)
   \end{equation} provided $\delta>0$.
\end{itemize}
\end{mydefn}
The collection of all $\psi$-relaxed controls with initial value $(s,\theta)$ is denoted by $\wt \Pi_{s,\theta}$. The function $\psi$ is used to characterize the regularity of the optimal controls.

The set $\wt \Pi_{s,\theta}$ consists of many interesting and well studied controls. We proceed to show that all the randomized stationary policies and deterministic stationary policies studied, for example,  in  \cite{Guo07, GH, Guo15, GVZ} are all associated with $\psi$-relaxed controls in a natural way.

Recall the definition of randomized Markov policies from \cite{GH}.  A randomized Markov policy is a real-valued function $\pi_t(C|\theta')$ that satisfies the following conditions:
\begin{itemize}
  \item[$\mathrm{(i)}$] For all $\theta'\in\S$ and $C\in \B(U)$, $t\mapsto \pi_t(C|\theta')$ is measurable on $[0,\infty)$.
  \item[$\mathrm{(ii)}$] For all $\theta'\in\S$ and $t\geq 0$, $C\mapsto \pi_t(C|\theta')$ is a probability measure on $\B(U)$, where $\pi_t(C|\theta')$ denotes the probability that an action in $C$ is taken when the system's state is $\theta'$ at time $t$.
\end{itemize}
A randomized Markov policy $\pi_t(\d u|\theta')$ is said to be stationary if $\pi_t(\d u|\theta')$ is independent of time $t$.

For any $\psi$-relaxed control $\alpha=(\Omega,\F,\F_t,\p,\La_t,\mu_t,s,\theta)$, we shall show that $\mu_t$ indeed acts as a randomized Markov policy $\pi_t(C|\theta)$. Firstly,
since $\mu_t$ is adapted to the $\sigma$-field generated by $\La_t$ according to Definition \ref{def-1}, this yields that there exists a measurable map $F_t:\S\ra \pb(U)$ such that $\mu_t=F_t(\La_t)$. (This is a result derived from the functional monotone class theorem in measure theory.) Thus, if $\La_t=\theta'$ is given, then $\mu_t=F_t(\theta')$ is a fixed probability measure in $\pb(U)$. We may rewrite $\mu_t$ as
\begin{equation}\label{mu}
\mu_t(\d u)=\sum_{\theta'\in\S} F_t(\theta')(\d u)\mathbf{1}_{\{\La_t=\theta'\}}.
\end{equation}
Condition (3) of Definition \ref{def-1} ensures that $F_t(\theta')$ is right-continuous with left-limits.  So $\pi_t(\d u|\theta'):=F_t(\theta')(\d u)$ satisfies the conditions (i) and (ii) of a randomized Markov policy. Consequently, the class of $\psi$-relaxed controls is a subclass of randomized Markov policies in some sense.

Moreover, for a randomized stationary policy $\pi(\d u|\theta')$, let
\begin{equation}\label{trans-1}
\tilde \mu_t=\sum_{\theta'\in \S}\pi(\d u|\theta')\mathbf{1}_{\La_t=\theta'}, \quad t\in [0,T].
\end{equation} According to the path property of continuous-time Markov chains, it is clear that $(\tilde \mu_t)$ defined by \eqref{trans-1} satisfies the condition (3) of Definition \ref{def-1} with $\nu_t(\d u,\theta')=\pi(\d u|\theta')$ for all $t\geq 0$ and $\theta'\in\S$. Hence, $W_\nu([t_1,t_2))=0$ for every $0\leq t_1<t_2$. Corresponding to the randomized stationary Markov policy $\pi(\d u|\theta')$, there exists a CTMDPs $(\La_t)$ in some probability space $(\Omega,\F,\F_t,\p)$ with initial value $\La_s=\theta$; see \cite[Chapter 2]{GH}. It follows immediately that $(\Omega,\F,\F_t,\p,\La_t,\tilde \mu_t,s,\theta)$ is a $\psi$-relaxed control for any $\psi$ satisfying \eqref{psi}.  By viewing a deterministic stationary policy $\xi:\S\ra U$ as a randomized policy $\pi:\S\ra \pb(U)$ through the transform
\[\pi(\d u|\theta')=\mathbf{1}_{\xi(\theta')}(\d u),\]
we know that every deterministic stationary policy is corresponding to a $\psi$-relaxed control.

Conditions (3) and (4) of Definition \ref{def-1} also tell us that the transition rate does not depend on the past of the process $(\La_t)$, so the process $(\La_t)$ is indeed a Markov process. Put
\begin{equation}\label{m-q}
q(t,\theta',A)=\E\Big[\!\int_U\!\! q(\theta',A;u)\mu_t(\d u)\big|\La_t=\theta'\Big],\  q(t,\theta')=\E\Big[\!\int_U \!\!q(\theta';u)\mu_t(\d u)\big|\La_t=\theta'\Big]
\end{equation} for $A\in \B(\S)$,
then the transition probability of the process $(\La_t)$ satisfies
\begin{equation}\label{m-0}
\p(\La_{t+\delta}\in A|\La_t=\theta')-\mathbf 1_A(\theta')=\big(q(t,\theta',A)-q(t,\theta')\mathbf 1_A(\theta')\big)\delta+o(\delta).
\end{equation}

Given two measurable functions $f:[0,T] \times \S\times U\to \R$ and $g:\S\ra \R$, the expected cost under the policy $\tilde \mu\in \wt\Pi$ is defined by
\begin{equation}\label{m-1}
J(s,\theta, \alpha)=\E\Big[\int_s^T f(t,\La_t,\mu_t)\d t+g(\La_T)\Big], \quad s\in [0,T), \,\theta\in \S.
\end{equation}
Define the value function by
\begin{equation}\label{m-2}
V(s,\theta)=\inf_{ \alpha\in  \wt \Pi_{s,\theta}} J(s,\theta,\alpha),\quad s\in [0,T), \,\theta\in \S.
\end{equation}
For $s\in [0,T],\, \theta\in\S$, a $\psi$-relaxed control   $\alpha^\ast\in \wt \Pi_{s,\theta}$ is called   \emph{optimal} if
\begin{equation}\label{m-5}
V(s,\theta)= J(s,\theta, \alpha^\ast).
\end{equation}

\subsection{Existence of optimal control}
After the preparation of the previous subsection, we can state our result on the existence of optimal $\psi$-relaxed controls. We shall follow Haussmann and Suo's approach, and one can refer to \cite{Guo15} for alternative approach in the setting of CTMDPs without the random impact of the environment.

\begin{mythm}\label{tm-1}
Given $T>0$, assume (H1)-(H4) hold.
Suppose $f$ and $g$ are lower semi-continuous and bounded from below. Then for every $s\in [0,T)$ and $\theta\in \S$ there exists an optimal $\psi$-relaxed control $\alpha^\ast\in\wt \Pi_{s,\theta}$.
\end{mythm}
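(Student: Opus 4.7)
The strategy is a compactification argument in the spirit of Haussmann--Suo, adapted so that the limiting control $\mu_t$ remains $\sigma(\La_t)$-measurable. Fix $(s,\theta)\in[0,T)\times\S$ and take a minimizing sequence $\alpha^n=(\Omega^n,\F^n,\F^n_t,\p^n,\La^n_t,\mu^n_t,s,\theta)\in\wt\Pi_{s,\theta}$ with $J(s,\theta,\alpha^n)\downarrow V(s,\theta)$. Using \eqref{mu}, write $\mu^n_t=F^n_t(\La^n_t)$ for deterministic measurable fields $F^n:[0,T]\times\S\to\pb(U)$, each component $t\mapsto F^n_t(\theta')$ lying in $\mathcal D([0,T];\pb(U))$ with modulus bounded by $\psi$.

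The first step is to extract a subsequential limit of the deterministic fields $\{F^n\}$. For each fixed $\theta'\in\S$, the uniform bound $w_{F^n(\cdot,\theta')}([t_1,t_2))\le\psi(t_2-t_1)$ combined with compactness of $(\pb(U),W_1)$ and the modulus criterion \eqref{w2} for $\mathcal D([0,T];\pb(U))$ yields relative compactness; a diagonal extraction over the countable set $\S$ produces, along a subsequence still denoted by $n$, a limit field $F$ inheriting the same $\psi$-modulus. For tightness of $(\La^n_\cdot)$ in $\mathcal D([0,T];\S)$, I would apply Dynkin's formula with the Lyapunov function $\Phi$ from (H4) to obtain $\sup_n\sup_{t\le T}\E[\Phi(\La^n_t)]<\infty$, giving tightness of the time-marginals on the discrete space $\S$; the uniform bound (H2) on the rates then supplies Aldous-type control of jump times, yielding path tightness. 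Prokhorov and Skorokhod's representation theorem realize, on a common probability space $(\Omega,\F,\p)$, an a.s.\ Skorokhod convergence $\La^n\to\La$, so that $\mu^n_t=F^n_t(\La^n_t)\to F_t(\La_t)=:\mu_t$ at joint continuity points.

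The main obstacle is verifying condition (4) of Definition \ref{def-1}, namely that $(\La_t)$ under the limiting control $\mu_t$ has the correct infinitesimal transition rates $q(\cdot,\cdot;\mu_t)$. I would cast this as a martingale problem: for every bounded, finitely supported $\phi:\S\to\R$, each $\alpha^n$ furnishes the $\F_t^n$-martingale
\begin{equation*}
M^{n,\phi}_t=\phi(\La^n_t)-\phi(\theta)-\int_s^t\!\int_U Q_u\phi(\La^n_r)\,\mu^n_r(\d u)\,\d r.
\end{equation*}
Hypothesis (H3) together with the a.s.\ convergence identifies the pointwise limit of the integrand, while (H2) delivers the uniform bound $|Q_u\phi|\le 2M\|\phi\|_\infty$ for bounded convergence, so the martingale property passes to the limit; extension to a sufficiently rich class of test functions via truncation controlled by $\Phi$ then establishes \eqref{MD-1} for the limit pair $(\La,\mu)$. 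Together with the $\psi$-regularity of $F$, this shows $\alpha^\ast:=(\Omega,\F,\F_t,\p,\La_t,\mu_t,s,\theta)\in\wt\Pi_{s,\theta}$.

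Finally, since $f$ and $g$ are lower semi-continuous and bounded from below, Fatou's lemma applied along the a.s.\ Skorokhod convergence gives
\begin{equation*}
J(s,\theta,\alpha^\ast)\le\liminf_{n\to\infty}J(s,\theta,\alpha^n)=V(s,\theta),
\end{equation*}
so $\alpha^\ast$ is optimal. The principal technical hurdle is the martingale-problem identification: unlike the pure-diffusion setting of \cite{HS95}, one must ensure the limiting jumping process remains an honest Markov chain with the correct rates under the limiting relaxed control, and it is precisely the $\psi$-regularity imposed on each $F^n$ that prevents $F$ from degenerating into a non-Markovian randomization in the limit.
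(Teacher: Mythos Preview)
Your argument is correct and follows the same overall compactification--Skorokhod--lower-semicontinuity template as the paper, but it diverges from the paper's proof in two substantive places.

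First, you extract a subsequential limit of the \emph{deterministic} fields $F^n:[0,T]\times\S\to\pb(U)$ by a diagonal argument over the countable state space, and only afterwards run Skorokhod on $(\La^n)$ alone, defining $\mu_t:=F_t(\La_t)$ at the end. The paper instead works with the joint law $R_n$ of $(\La^n,\mu^n)$ on the canonical space $\mathcal D([0,T];\S)\times\mathcal U$, proves tightness of both marginals, and applies Skorokhod to the pair. Your route makes condition~(3) of Definition~\ref{def-1} (that $\mu_t^{(0)}$ be $\sigma(\La_t^{(0)})$-measurable) automatic by construction; the paper has to recover it a posteriori via a backward-filtration argument, showing $\overline{\sigma(\mu_t^{(0)})}\subset\bigcap_n\F_{-n}^\La=\overline{\sigma(\La_t^{(0)})}$. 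Note, incidentally, that the modulus in Definition~\ref{def-1}(3) is the full oscillation $w_\nu([t_1,t_2))$, so each $F^n(\cdot,\theta')$ is in fact uniformly continuous and your diagonal extraction lands in $C([0,T];\pb(U))$, which slightly simplifies the convergence $F^n_r(\La^n_r)\to F_r(\La_r)$ you need inside the martingale integral.

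Second, for condition~(4) you pass to the limit in the martingale problem for $M^{n,\phi}$, whereas the paper instead establishes an explicit Dyson--Cauchy series for the transition semigroup (Proposition~\ref{tm-0}) and passes to the limit termwise in that series, using the uniform bound $\|Q^\mu(t)\|\le 2M$ for dominated convergence. Both approaches rely on (H2) for the uniform bound and (H3) for continuity of $u\mapsto q(\theta,A;u)$; yours is more standard and portable, while the paper's yields the limiting semigroup $P_{0,t}^{\mu^{(0)}}$ explicitly and reads off \eqref{MD-1} by differentiation. The remaining steps---Lyapunov/Aldous tightness for $(\La^n)$ and Fatou for the cost---coincide with the paper's.
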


Before proving this theorem, for a  relaxed control $(\Omega,\F,$ $\F_t,\p,\La_t,\mu_t,s,\theta)$ we provide a representation of the transition probability of the Markov chain $(\La_t)$. Define
\begin{equation}\label{m-7}
P_{s,t}^{ \mu}\mathbf 1_{A}(\theta)=P^{\mu}(s,\theta,t,A)=\p(\La_t\in A|\La_s=\theta),\quad \theta\in \S,\, A\in \B(\S),
\end{equation}
and
\begin{equation}\label{m-8}
 Q^{ \mu}(t)h(\theta)=\int_{\S}\!  q(t,\theta,\d \gamma)h(\gamma)-  q(t,\theta)h(\theta),\quad h\in \B_b(\S),
\end{equation} where $q(t,\theta,\cdot)$ and $q(t,\theta)$ are given by \eqref{m-q}, $\B(\S)$ denotes the set of measurable functions on $\S$, and $\B_b(\S)$ is the set of bounded measurable functions on $\S$.

%

\begin{myprop}\label{tm-0}For a  relaxed control $(\Omega,\F,\F_t,\p,\La_t,\mu_t,s,\theta)$, it holds, for $h\in \B_b(\S)$,
\begin{equation}\label{m-9}
\begin{split}
P_{s,t}^{  \mu} h(\theta)&=h(\theta)+\int_s^t   Q^{  \mu}(t_1)h(\theta)\d t_1+\int_s^{t}\!\int_{s}^{t_2}\!  Q^{  \mu}(t_2)  Q^{ \mu}(t_1)h(\theta)\d t_1\d t_2\\
&\quad+\sum_{n=3}^\infty\!\int_s^{t}\!\int_s^{t_n}\!\cdots\!\int_s^{t_2}  Q^{  \mu}(t_n)  Q^{ \mu}(t_{n-1})\cdots  Q^{ \mu}(t_1)h(\theta)\d t_1\ldots\d t_{n-1}\d t_n.
\end{split}
\end{equation}
\end{myprop}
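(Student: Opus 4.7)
The plan is to derive the expansion \eqref{m-9} by iterating a Kolmogorov-type integral equation for $P^\mu_{s,t}$, in the spirit of the Dyson--Phillips expansion of a time-inhomogeneous semigroup with a uniformly bounded generator. The discussion preceding the proposition has already identified $(\La_t)$ as an inhomogeneous Markov chain with instantaneous rates $q(t,\theta,\cdot)$ given by \eqref{m-q}, so I may treat $P^\mu_{s,t}\colon\B_b(\S)\to\B_b(\S)$ as a genuine two-parameter transition operator with $\|P^\mu_{s,t}h\|_\infty\le\|h\|_\infty$.

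I would first upgrade the infinitesimal identity \eqref{m-0} to an integral equation. The key point is that, because $\mu_t$ is $\sigma(\La_t)$-measurable (so $\mu_t=F_t(\La_t)$ as shown earlier), the conditioning on $\{\La_t=\theta',\mu_t=\mu\}$ in Definition~\ref{def-1}(4) collapses to conditioning on $\{\La_t=\theta'\}$, which is precisely \eqref{m-0}. For $h=\mathbf{1}_A$, conditioning $\E[h(\La_{t+\de})\mid\La_s=\theta]$ on $\F_t$ and applying \eqref{m-0}, then extending linearly and by dominated convergence to $h\in\B_b(\S)$, yields $\partial_t P^\mu_{s,t}h(\theta)=P^\mu_{s,t}(Q^\mu(t)h)(\theta)$ and hence the Kolmogorov integral equation
\[P^\mu_{s,t}h(\theta)=h(\theta)+\int_s^t P^\mu_{s,r}\bigl(Q^\mu(r)h\bigr)(\theta)\,\d r.\]

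Next, I would iterate this equation: substitute the same identity for the inner $P^\mu_{s,r}$ and use Fubini to rewrite each nested iterated integral over the simplex $\{s\le r_1\le\cdots\le r_n\le t\}$ into the form appearing in \eqref{m-9}. After $n$ steps this produces the first $n$ terms of \eqref{m-9} together with a remainder of the form $\int_{s\le r_1\le\cdots\le r_n\le t}P^\mu_{s,r_1}\bigl(Q^\mu(r_1)\cdots Q^\mu(r_n)h\bigr)(\theta)\,\d r_1\cdots\d r_n$. To finish, the uniform bound (H2), $\sup_{\theta,u}q(\theta;u)\le M$, gives $\|Q^\mu(r)g\|_\infty\le 2M\|g\|_\infty$ on $\B_b(\S)$; combined with the $L^\infty$-contractivity of $P^\mu_{s,r_1}$ and the simplex volume $(t-s)^n/n!$, the remainder is dominated by $(2M)^n(t-s)^n/n!\cdot\|h\|_\infty\to 0$, which also yields absolute and uniform convergence of the whole series. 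The only non-routine step is the first one: carefully exploiting the $\sigma(\La_t)$-measurability of $\mu_t$ to reduce the doubly conditioned rate in \eqref{MD-1} to the single conditional rate \eqref{m-0} that actually generates the semigroup. Once this reduction is in place, the iteration and the tail estimate are standard.
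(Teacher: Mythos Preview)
Your proposal is correct and mirrors the paper's own argument: both derive the forward Kolmogorov equation $\partial_t P_{s,t}^{\mu}h=P_{s,t}^{\mu}Q^{\mu}(t)h$ from the infinitesimal description \eqref{m-0}, then obtain \eqref{m-9} as the Dyson--Phillips expansion, controlling the tail via the uniform bound $\|Q^{\mu}(t)\|\le 2M$ coming from (H2). The only cosmetic difference is that the paper invokes a reference (Daleckij--Krein) for the series representation of the solution of the forward equation, whereas you carry out the iteration and remainder estimate by hand.
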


\begin{proof}
Due to (4) of Definition \ref{def-1} and \eqref{m-q}, \eqref{m-0}, we know that $(\La_t)$ is a time-inhomogeneous Markov process. Therefore,
\begin{align*}
  P_{s,t+\delta}^{\mu}h(\theta)=P_{s,t}^{\mu}P_{t,t+\delta}^\mu h(\theta),\quad  h\in \B_b(\S).
\end{align*}
Invoking \eqref{m-0}, this yields the equation
\begin{equation}\label{m-10}
\frac{\d}{\d t}P_{s,t}^\mu h(\theta')=P_{s,t}^\mu Q^{\mu}(t)h(\theta'),\quad P_{s,s}^\mu h(\theta')=h(\theta'),\quad \theta'\in \S, h\in \B_b(\S).
\end{equation}
See, e.g. \cite{Feller} for more details on this deduction.
Thus, according to \cite[Chapter III]{DK}, formulae (1.12) and (1.15) therein, the unique solution of \eqref{m-10} has an explicit representation \eqref{m-9} in terms of the Cauchy operator.

Let us show the series in \eqref{m-9} is well defined. Endowed with the essential supremum norm $\|\cdot\|_\infty$, $\B_b(\S)$ becomes a Banach space. Viewed as a linear operator over $\B_b(\S)$, define the operator norm of $Q^\mu(t)$ by:
\[\|Q^\mu(t)\|=\sup_{\|h\|_\infty\leq 1} \|Q^\mu(t)h\|_\infty,\]
which  obviously satisfies
\[\|Q^\mu(t)\|\leq \sup_{\theta\in \S}\sup_{u\in U} 2q(\theta;u)\leq 2M<\infty,\quad \forall t\in [0,T].\] Hence,
\begin{equation} \label{m-11}
  \begin{split}
    &\Big|\int_s^t\!\int_s^{t_n}\ldots\int_{s}^{t_2}\!Q^{  \mu}(t_n)Q^{  \mu}(t_{n-1})\ldots Q^{ \mu}(t_1)h(\theta)\d t_1\ldots\d t_{n-1}\d t_n\Big|\\
    &\leq \|h\|_\infty\!\int_s^t\!\int_s^{t_n}\!\ldots\int_s^{t_2}\|Q^{  \mu}(t_n)\|\|Q^{  \mu}(t_{n-1})\|\ldots\|Q^{  \mu}(t_1)\|\d t_1\ldots\d t_{n-1}\d t_{n}\\
    &=\frac{\|h\|_\infty}{n!}\int_s^t\!\int_s^{t}\!\ldots\int_s^{t}\|Q^{  \mu}(t_n)\|\|Q^{  \mu}(t_{n-1})\|\ldots\|Q^{  \mu}(t_1)\|\d t_1\ldots\d t_{n-1}\d t_{n}\\
    &=\frac{\|h\|_\infty}{n!}\Big(\int_s^t\|Q^{ \mu}(r)\|\d r\Big)^n\\
    &\leq \frac{(2M(t-s))^n}{n!} \|h\|_\infty,
    \end{split}
  \end{equation} since the integral is invariant under any perturbation of the variables $t_1,\ldots,t_n$.
  Therefore, the series in \eqref{m-9} is convergent, and further the operator $P_{s,t}^\mu$ is well defined.
\end{proof}

Just as done in \cite{HS95}, the relaxed controls  can be transformed into controls in the canonical path space to simplify the arguments.
Let
\begin{equation}\label{U}
\mathcal U=\{\nu:[0,T]\ra \pb(U);\ \nu\in \mathcal{D}([0,T];\pb(U)),\ w_\nu''(\delta)\leq \psi(\delta), \ \delta\in (0,T]\},
\end{equation}
which is viewed as a subspace of $\mathcal{D}([0,T];\pb(U))$.
Denote
\[\mathcal D([0,T];\S)=\big\{y:[0,T]\ra \S \,\text{is right-continuous with left-limits}\big\},\]
which is a Polish space endowed with Skorokhod topology.
Consider the canonical space
\[\mathcal Y=\mathcal{D}([0,T];\S)\times \mathcal U.\]
Let $\wt{ \mathcal{D}}$, $\wt{ \mathcal{U}}$ be their Borel $\sigma$-fields, and $\wt{\mathcal D}_t$,
$\wt{ \mathcal U}_t$ the $\sigma$-fields up to time $t$.
Put
\[\wt{ \mathcal Y}=\wt{ \mathcal D}\times \wt{ \mathcal U},\quad \wt{ \mathcal Y}_t=\wt{ \mathcal D}_t\times\wt{  \mathcal U}_t.\]
Then, every $\psi$-relaxed control $(\Omega,\F,\F_t,\p,\La_t,\mu_t,s,\theta)$ can be transformed into a new $\psi$-relaxed control $(\mathcal Y,\wt{\mathcal Y},\wt{\mathcal Y}_t,R,\La_t,\mu_t,s,\theta)$ via the map
$\Psi:\Omega\ra \mathcal Y$ defined by
\[\Psi(\omega)=(\La_t(\omega),\mu_t(\omega))_{t\in [0,T]},\quad \La_r:=\theta,\ \mu_r:=\mu_s,\ \forall\,r\in[0,s],\]
where
$\dis R=\p\circ\Psi^{-1}$ is a probability measure on $\mathcal Y$. Similar to the discussion in \cite{HS95}, it is clear that the $\psi$-relaxed control  $\alpha=(\mathcal Y,\wt{\mathcal Y},\wt{\mathcal Y}_t,R,\La_t,\mu_t,s,\theta)$ is completely determined by the probability measure $R$, so in the canonical space we use $R$ itself to denote this $\psi$-relaxed control $\alpha$.

\noindent\textbf{Proof of Theorem \ref{tm-1}}\quad
If $ V(s,\theta)=\infty$, then every $\psi$-relaxed control $\alpha$ will be optimal. So, we only need to consider the case $ V(s,\theta)<\infty$.  We only consider the case $s=0$ to simplify the notation. The proof is separated into three steps.

\textbf{Step 1}. According to the definition of $ V(0,\theta)$ and previously introduced representation of $\psi$-relaxed controls on the canonical space, there exists a sequence of probability measures $R_n$, $n\geq 1$, on $\mathcal Y$ such that
\begin{equation}\label{vn}
\lim_{n\ra \infty}  J(0,\theta,R_n)= V(0,\theta)<\infty.
\end{equation}
In this step, we aim to prove that  $(R_n)_{n\geq 1}$ is tight. To this end, let $\ll_\La^n$ and $\ll_\mu^n$, $n\geq 1$, the marginal distribution of $(\La_t)_{t\in [0,T]}$ and $(\mu_t)_{t\in [0,T]}$ respectively under $R_n$.

Since $U$ is a compact set, $(\pb(U), W_1)$ is a compact Polish space. Then,  according to \cite[Theorem 14.3]{Bill} or \cite[Theorem 6.3]{EK},
$\mathcal U$ is a compact subset in $\mathcal{D}([0,T];\pb(U))$. Moreover, by the definition of $\psi$-relaxed control, $\mu$ admits a representation \eqref{mu}, and $F_t(\theta')$ is in $\mathcal U$ for every $\theta'\in\S$. The compactness of $\pb(U)$ implies the boundedness of $\pb(U)$, i.e. there exists a constant $K>0$ such that $W_1(\nu_1,\nu_2)\leq K$ for any $\nu_1,\,\nu_2\in\pb(U)$. This yields immediately that for some fixed $\nu\in \pb(U)$,
\[R_n\big(\omega: \sup_{0\leq t\leq T}W_1(\mu_t, \nu)>K\big)= 0,\quad n\geq 1.\]

We go to estimate $R_n(\omega:w_{\mu(\omega)}''(\delta)\geq \veps)$, $n\geq 1$. For any $\veps\in(0,K)$,  there exists a $\delta>0$ such that $\psi(\delta)<\veps$. According to Definition \ref{def-1}, for every $\theta'\in\S$, denoting by $\nu_t(\cdot,\theta'):=\mu_t(\cdot|\La_t=\theta')$, it holds
\[w_\nu([t_1,t_2))\leq \psi(t_2-t_1)\leq \psi(\delta)<\veps, \ \ 0\leq t_1<t_2\leq T,\ t_2-t_1\leq \delta.\]
Also, we can rewrite $\mu_t(\cdot)=\nu_t(\cdot,\La_t)$. By the triangle inequality,
\begin{align*}
W_1(\mu_t,\mu_{t_1})&\leq W_1(\nu_t(\cdot,\La_t),\nu_{t_1}(\cdot, \La_t))+W_1(\nu_{t_1}(\cdot,\La_t),\nu_{t_1}(\cdot,\La_{t_1}))
\\
&\leq W_1(\nu_t(\cdot,\La_t),\nu_{t_1}(\cdot, \La_t)) +K\mathbf{1}_{\La_t\neq \La_{t_1}}.
\end{align*}
Hence, for any $t_1,\,t, t_2\in [0,T]$ with $t_1\leq t\leq t_2$ and $t_2-t_1\leq \delta$, if there exist no more than two jumps for the Markov chain $(\La_t)$ during the time period $[t_1, t_2]$, it must hold
\begin{align*}
&\min \{W_1(\mu_{t_1},\mu_t), W_1(\mu_{t_2},\mu_t)\}\\
&\leq \min\{W_1(\nu_{t}(\cdot,\La_t),\nu_{t_1}(\cdot,\La_t))+K\mathbf 1_{\La_t\neq \La_{t_1}}, W_1(\nu_{t}(\cdot,\La_t),\nu_{t_2}(\cdot,\La_t))+K\mathbf 1_{\La_t\neq \La_{t_2}}\}  <\veps.
\end{align*}
Thus,
\begin{equation}\label{et-1}
\begin{split}
&R_n\big(\omega:\ \min\{W_1(\mu_{t_1},\mu_t),W_1(\mu_{t_2},\mu_t)\}\geq \veps\big)\\
&\leq R_n(\omega: \ \text{the process $(\La_r)$ owns at least two jumps during $[t_1,t_2]$})\\
&\leq o(\delta).
\end{split}
\end{equation}
Moreover, the arbitrariness of $t_1,\,t,\,t_2$ implies that for each positive $\veps$ and $\eta$, there exists $\delta\in (0,T)$ such that
\begin{equation}\label{vn-1}
\begin{split}
R_n(\omega: w_\mu''(\delta)\geq \veps)\leq o(\delta)\leq \eta.
\end{split}
\end{equation} For the Markov chain $(\La_t)$ with the bounded transition rate matrices, it is clear that for $\delta>0$ sufficiently small,
\begin{equation}\label{et-2}
R_n(\omega: w_\mu([0,\delta))\geq \veps)\leq \eta,\qquad R_n(\omega: w_\mu([T-\delta,T))\geq \veps)\leq \eta,\quad n\geq 1.
\end{equation}
Applying \cite[Theorem 15.3]{Bill}, we show that $(\ll_\mu^n)_{n\geq 1}$ is tight.

Next, we go to prove the set of probability measures $(\ll_\La^n)_{n\geq 1}$ on $\mathcal D([0,T];\S)$ is tight. We shall apply Kurtz's tightness criterion
(cf. \cite[Theorem 8.6, p.137]{EK}) to prove it.

On one hand, by (H4) and It\^o's formula, we have
\begin{align*}
  \E_{R_n}\Phi(\La_t )&=\Phi(\theta)+\E_{R_n}\int_0^tQ_{\mu_s }\Phi(\La_s )\d s\\
  &\leq \Phi(\theta)+ \E_{R_n}\int_0^t\big(\lambda\Phi(\La_s )+\kappa_0\big)\d s.
\end{align*}
Then Gronwall's inequality leads to that
\begin{equation}\label{n-1}
\E_{R_n} \Phi(\La_t )\leq \big(\Phi(\theta)+\kappa_0T\big)\e^{\lambda t}, \quad t\in [0,T].
\end{equation}
Then, for any $\veps>0$, take $N_\veps$ large enough so that
\[\frac{\E_{R_n}\Phi(\La_t )}{N_\veps}\leq \frac{\big(\Phi(\theta)+\kappa_0T\big)\e^{\lambda T}}{N_\veps}<\veps.\]
Let
\[K_\veps=\big\{\gamma\in\S;\ \Phi(\gamma)\leq N_\veps\big\},\]
which is a compact set because $\Phi$ is a compact function.
Then,
\begin{equation}\label{n-2}
\sup_n{R_n}\big(\La_t\in K_\veps^c\big)\leq \sup_n\frac{\E_{R_n}\Phi(\La_t )}{N_\veps}<\veps.
\end{equation}

On the other hand, we also need to show that for any $\delta>0$ there exists a nonnegative random variable $\gamma_n(\delta)\geq 0$ such that
\[\E_{R_n}\big[\mathbf{1}_{\La_{t+u}\neq\La_t}|\mathscr F_t\big]\leq \E_{R_n}[\gamma_n(\delta)|\mathscr F_t], \quad 0\leq t\leq T, \ 0\leq u\leq \delta,\]
and $\lim_{\delta\ra 0}\sup_n\E_{R_n}[\gamma_n(\delta)]=0$.
Under (H2), the transition rate $(q(\theta,A;u),q(\theta;u))$ of $(\La_t)$ is bounded, and hence
\[R_n\big(\La_s =\La_t ,\, \forall\,s\in[t,t+u]\big)\geq \E_{R_n}\Big[\exp\Big(\!-\!\int_t^{t+u}\!\!\!\sup_{\theta\in \S} q(\theta;\mu_s )\d s\Big)\Big]\geq \exp(-Mu).\]
Then, for every $0\leq u\leq \delta$,
\begin{align*}
  \E_{R_n}\big[\mathbf 1_{\{\La_{t+u}\neq \La_t\}}\big]
   & \leq 1-R_n(\La_s=\La_t,\,\forall\,s\in[t,t+u]) \\
   & \leq 1-\e^{-Mu}\leq 1-\e^{-M\delta}=:\gamma_n(\delta).
\end{align*} It is clear that $\lim_{\delta\ra 0}\sup_n\E_{R_n}\gamma_n(\delta)=0$. Combining this with \eqref{n-2}, we conclude that $(\ll_\La^n)_{n\geq 1}$ is tight.

As a consequence,
the fact $(\ll_\La^n)_{n\geq 1}$ and $(\ll_\mu^n)_{n\geq 1}$ are both tight leads to that for any $\veps>0$, there exist compact sets $K_1\subset C([0,T]; \pb(U))$ and $K_2\subset \mathcal D([0,T];\S)$ such that
\begin{gather*}
R_n\big(\mathcal D([0,T];\S)\times\!K_1^c\big)=\ll_{\mu}^{n}\big(K_1^c\big)<\veps,\\
R_n\big(K_2^c\times \pb([0,T]\times U)\big)=\ll_{\La}^n\big(K_2^c\big)<\veps,
\end{gather*} where $K_i^c$, $i=1,2$, stands for the complement of $K_i$. So,
\[R_n(\big(K_1\times K_2\big)^c\big)\leq R_n\big(\mathcal D([0,T];\S)\times\!K_1^c\big)+R_n\big(K_2^c\times \pb([0,T]\times U)\big)< 2\veps,\]
which implies the desired tightness of $(R_n)_{n\geq 1}$.

\textbf{Step 2}. We go to show the existence of the optimal $\psi$-relaxed control in this step.
According to the result of \textbf{Step 1}, $(R_n)_{n\geq 1}$ is tight, and up to taking a subsequence, $R_n$ converges weakly to some probability measure $R_0$ on $\mathcal Y$.  According to Skorokhod's representation theorem (cf. \cite[Chapter 3]{EK}, Theorem 1.8, p. 102), there exists a probability space $(\Omega',\F',\p')$ on which are defined $\mathcal Y$-valued random variables $Y_n=(\La_t^{(n)},\mu_t^{(n)})_{t\in[0,T]}$, $n=1,2,\ldots$, and $Y_0=(\La_t^{(0)},\mu_t^{(0)})_{t\in[0,T]}$ with distribution $R_n$, $n=1,2,\ldots$, and $R_0$ respectively such that
\begin{equation}\label{n-3}
  \lim_{n\ra \infty} Y_n=Y_0,\quad \text{$\p'$-a.s.}.
\end{equation}

Denote $\F_t'$ the natural $\sigma$-field generated by $(\La_s^{(n)},\mu_s^{(n)})$, $n=0,1,2,\ldots$, up to time $t$. We shall prove that $\alpha^\ast=(\Omega',\F',\F_t',\p',\La_t^{(0)},\mu_t^{(0)},0,\theta)$ is an optimal $\psi$-relaxed control with respect to the value function $V(0,\theta)$. To this end, we need to check
that $\alpha^\ast$ satisfies the conditions of Definition \ref{def-1}. Obviously, conditions (1) and (2) of Definition \ref{def-1} hold.

To check condition (4), the transition semigroup of $(\La_t^{(n)})$, $P_{s,t}^{\mu^{(n)}}\mathbf{1}_{A}(\theta'):=\p'(\La_t^{(n)}\in A|\La_s^{(n)}=\theta')$, $\theta'\in \S$, $A\in \B(\S)$, is determined by the equation \eqref{m-9} with $Q^{\mu}(t)$ being replaced by $Q^{\mu^{(n)}}(t)$ defined as follows:
\begin{equation}\label{n-5}
\begin{split}
Q^{\mu^{(n)}}(t) h(\theta')&=\E\Big[\int_U\!\int_{\S}\!\!q(\theta',\d \gamma;u)h(\gamma)\mu_t^{(n)}(\d u)\big|\La_t^{(n)}=\theta'\Big]\\
&\qquad-\E\Big[\int_U\!\! q(\theta';u)\mu_t^{(n)}(\d u) h(\theta')\big|\La_t^{(n)}=\theta'\Big].
\end{split}
\end{equation}
Similarly, we can define the operators $P_{s,t}^{\mu^{(0)}}$ and $Q^{\mu^{(0)}}(t)$.

For $0\leq t_1<t_2<\ldots<t_k\leq T$, define the projection map $\pi_{t_1\ldots t_k}:\mathcal D([0,T];\S)\ra \S^k$ by
\[\pi_{t_1\ldots t_k}(\La_\cdot)=(\La_{t_1},\ldots,\La_{t_k}).\]
Let $\mathcal T_0$ consist of those $t\in [0,T]$ for which the projection $\pi_t:\mathcal D([0,T];\S)\ra \S$ is continuous except at points form a set of $R_0$-measure 0. For $t\in [0,T]$, $t\in \mathcal T_0$ if and only if $R_0(J_t)=0$, where
\[J_t=\{\La\in \mathcal D([0,T];\S); \La_t\neq \La_{t-}\}.\]
Also, $0,\,T\in \mathcal{T}_0$ by convention.
As a probability measure on $\mathcal{D}([0,T];\S)$, it is known that the complement of $\mathcal T_0$ in $[0,T]$  is at most countable (cf. \cite[p. 124]{Bill}).  Analogously, define the projection map $\tilde \pi_{t_1\ldots t_k}:\mathcal U\ra \pb(U)^k$ by
\[\tilde \pi_{t_1\ldots t_k}(\mu_\cdot)=(\mu_{t_1},\ldots,\mu_{t_k}),\]
which is clearly continuous.

Since $(\La_t^{(n)},\mu_t^{(n)})_{t\in [0,T]}$ converges almost surely to $(\La_t^{(0)},\mu_t^{(0)})_{t\in [0,T]}$ in the product space $\mathcal{D}([0,T];\S)\times \mathcal{U}$ as $n\ra \infty$ and $\pi_t\times \tilde \pi_t$ is continuous for $t\in \mathcal{T}_0$,
we obtain that $(\La_t^{(n)},\mu_t^{(n)})$ converges almost surely to $(\La_t^{(0)},\mu_t^{(0)})$ for $t\in \mathcal{T}_0$. Since $T\in \mathcal{T}_0$, this implies, in particular, that
\begin{equation}\label{n-5.5}
\La_T^{(n)} \ \text{converges almost surely to $\La^{(0)}_T$ as $n\to \infty$.}
\end{equation}
Letting $n\to \infty$ in \eqref{n-5} for $t\in \mathcal{T}_0$, we obtain
\begin{equation*}
   \lim_{n\ra \infty} Q^{\mu^{(n)}}(t) h(\theta')=Q^{\mu^{(0)}}(t)h(\theta'),\quad h\in \B_b(\S),\ \theta'\in \S.
\end{equation*}
For $t\in \mathcal{T}_0$, it holds
\begin{equation}\label{n-6}
\lim_{n\ra \infty}\p'(\La_{t}^{(n)}\in A|\La_0^{(n)}=\theta)=\p'(\La_t^{(0)}\in A|\La_0^{(0)}=\theta),\quad A\in \B(\S),\theta\in\S.
\end{equation}
Moreover, according to \cite[Theorem 7.8, p.131]{EK}, for every $t\in [0,T]$, there exists a sequence $\{s_n\}_{n\geq 1}$ decreasing to $t$ and $\La_{s_n}^{(n)}$ converges weakly to $\La_t^{(0)}$.
For every $t\in [0,T]$, letting $n\to \infty$ in the following equation
\begin{equation}\label{n-7}
\begin{split}
P_{0,s_n}^{\mu^{(n)}} \!h(\theta')&\!=\!h(\theta')\!+\!\int_0^{s_n}   Q^{\mu^{(n)}}\!(t_1)h(\theta')\d t_1\!+\!\int_0^{s_n}\!\int_{0}^{t_2}\!  Q^{\mu^{(n)}}\!(t_2)  Q^{\mu^{(n)}}\!(t_1)h(\theta')\d t_1\d t_2\\
&\ +\!\sum_{k=3}^\infty\!\int_0^{s_n}\!\!\!\int_0^{t_k}\!\!\!\cdots\!\!\int_0^{t_2}\!  Q^{\mu^{(n)}}\!(t_k) \!  Q^{\mu^{(n)}}\!(t_{k-1})\cdots \! Q^{\mu^{(n)}}\!(t_1)h(\theta')\d t_1\ldots\d t_{k-1}\d t_k,
\end{split}
\end{equation}
we obtain that
\begin{equation}\label{n-8}
\begin{split}
P_{0,t}^{\mu^{(0)}} \!h(\theta')&\!=h(\theta')+\int_0^t   Q^{\mu^{(0)}}(t_1)h(\theta')\d t_1+\int_0^{t}\!\int_{0}^{t_2}\!  Q^{\mu^{(0)}}(t_2)  Q^{\mu^{(0)}}(t_1)h(\theta')\d t_1\d t_2\\
&\ +\!\sum_{k=3}^\infty\!\int_0^{t}\!\!\int_0^{t_k}\!\!\cdots\!\!\int_0^{t_2}  Q^{\mu^{(0)}}(t_k)  Q^{\mu^{(0)}}(t_{k-1})\cdots  Q^{\mu^{(0)}}(t_1)h(\theta')\d t_1\ldots\d t_{k-1}\d t_k.
\end{split}
\end{equation}
Because the right-hand side of \eqref{n-8} is continuous in $t$, we have from \eqref{n-8} that $t\mapsto P_{0,t}^{\mu^{(0)}}h(\theta')$ is continuous.
Whence, \eqref{m-0}, and equivalently \eqref{MD-1}, is satisfied by taking derivative w.r.t.\!\! $t$ in both sides of \eqref{n-8} and taking $h(\theta')=\mathbf{1}_A(\theta')$ for $A\in \B(\S)$.
This means that $(\La_t^{(0)})$ is a continuous-time Markov chain  associated with $(\mu_t^{(0)})$. As a consequence, there is  no $t\in (0,T]$ such that $R_0(J_t)>0$, and hence $\mathcal{T}_0=[0,T]$.

Now we go to check condition (3).
Since $(\mu_t^{(n)})_{t\in[0,T]}$ converges almost surely to $(\mu_t^{(0)})_{t\in[0,T]}$ in $\mathcal{D}([0,T];\pb(U))$, we have for each $t\in[0,T]$, $\mu_t^{(n)}$ converges almost surely to $\mu_t^{(0)}$ since $\mathcal{T}_0$ associated with $(\mu_t^{(0)})_{t\in[0,T]}$ equals to $[0,T]$. 
We adopt the notation in the study of backward martingale to define the filtration with negative indices. Let $\F_{-n}^\La=\overline{\sigma(\La_t^{(m)},m\geq n)}$, the completion of the $\sigma$-field generated by $\La_t^{(m)}, m\geq n$. Then
\[\F_{-1}^\La\supset\F_{-2}^\La\supset\cdots\supset \F_{-n}^\La\supset\F_{-n-1}^\La\supset\cdots.\] Put $\F_{-\infty}^\La=\bigcap_{n\geq 1}\F_{-n}^\La$.  $\F_{-\infty}^\La$ is easily checked to be a $\sigma$-field which concerns only the limit behavior of the sequence $\La_t^{(n)}$, $n\geq 1$. Moreover, since there is no point in $[0,T]$ such that $(\La_t^{(0)})$ must jump at that point with positive probability.  Therefore, $\lim_{n\ra \infty}\La^{(n)}_t=\La_t^{(0)}$ a.s. for every $t\in [0,T]$, and further \[\F_{-\infty}^\La=\overline{\sigma(\La_t^{(0)})}.\]
Define $\F_{-n}^\mu=\overline{\sigma(\mu_t^{(m)}, m\geq n)}$. Due to Definition \ref{def-1} (3), $\mu_t^{(n)}$ is in $\F_{-n}^{\La}$ for each $n\geq 1$, and hence $\F_{-n}^\mu\subset \F_{-n}^\La$. Therefore, it follows from the fact $\lim_{n\ra \infty}W_1(\mu_t^{(n)},\mu_t^{(0)})=0$ a.s. that
\[\overline{\sigma(\mu_t^{(0)})}\subset \bigcap_{n\geq 1}\F_{-n}^\mu\subset \F_{-\infty}^\La=\overline{\sigma(\La_t^{(0)})},\]
which means that $\mu_t^{(0)}$ is adapted to $\sigma(\La_t^{(0)})$.

\textbf{Step 3}. Invoking \eqref{n-5.5}, \eqref{n-3}, \eqref{vn}, and \eqref{m-1},  we obtain by the lower semi-continuity of $f$ and $g$ that
\begin{equation}\label{n-4}
\begin{split}
  V(0,\theta)&=\lim_{n\ra \infty} \E_{\p'}\Big[\int_0^T\!\!f(t,\La_t^{(n)},\mu_t^{(n)})\d t+g(\La_T^{(n)})\Big]\\
  &=\lim_{n\ra \infty}\E_{\p'}\Big[\int_0^T\!\int_U\!\!f(t,\La_t^{(n)},u)\mu_t^{(n)}(\d u)\d t+g(\La_T^{(n)})\Big]\\
  &\geq \E_{\p'}\Big[\int_0^T\!\int_U\!\! f(t,\La_t^{(0)},u)\mu_t^{(0)}(\d u)\d t+g(\La_T^{(0)})\Big]\\
  &\geq V(0,\theta).
\end{split}
\end{equation} Hence, $\alpha^\ast$ is an optimal $\psi$-relaxed control.
The proof of this theorem is completed. \fin


After the existence of optimal $\psi$-relaxed control has been established, it is easy to use the time shift technique to prove the continuous property of the value function $V(s,\theta)$ under suitable condition of the cost functions; see the argument of Proposition \ref{t2} in a more complicated situation.
Moreover, based on the Dynkin formula, we can get a lower bound of the value function as follows.
Suppose there exists a measurable function $\varphi:[0,T]\times \S\ra \R$ satisfying $t\mapsto \varphi(t,\theta)$ is differentiable and
\begin{gather*}
  \varphi'(t,\theta)+f(t,\theta,u)+\sum_{\ell\in\S}q(\theta,\{\ell\};u)\varphi(t,\ell)
  -q(\theta;u)\varphi(t,\theta)\geq 0,\\
  \varphi(T,\theta)=g(\theta),
\end{gather*} for every $t\in [0,T]$, $\theta\in\S$, $u\in U$. Then
\[V(s,\theta)\geq \varphi(s,\theta),\quad s\in [0,T],\ \theta\in\S.\]
See, for example, \cite[Section 3]{Guo15} for more details.


\section{Optimal Markov control for CTMDPs in a random environment}

In this section, we consider the random impact of the environment to CTMDPs. In such situation, the cost function depends not only on the paths of continuous-time Markov chains, but also on a stochastic process used to characterize, for instance, the price of raw materials. Precisely, such a dynamical system consists of two components: a diffusion process $(X_t)$ and a continuous-time Markov chain $(\La_t)$, which is also called a regime-switching diffusion process; see, \cite{MY} and \cite{YZ} and references therein.
The process $(X_t)$ is determined by the following SDE:
\begin{equation}\label{3.1}
\d X_t=b(X_t,\La_t)\d t+\sigma(X_t,\La_t)\d B_t,
\end{equation} where $(B_t)$ is a Brownian motion in $\R^d$;
$(\La_t)$ is a continuous-time Markov process on the state space $\S$ associated with the $q$-pair $(q(\theta;u)$, $ q(\theta, A;u))$ satisfying
\begin{equation}\label{3.2}
\p(\La_{t+\delta}\in A|\La_t=\theta, \mu_t=\mu)-\mathbf{1}_{A}(\theta)=\big(q(\theta,A;\mu)
    -q(\theta;\mu)\mathbf{1}_{A}(\theta)\big)\delta+o(\delta)
\end{equation} provided $\delta>0$. The decision-maker still tries to minimize the cost through controlling the transition rates of the Markov chain $(\La_t)$, but now the cost function may depend on the diffusion process $(X_t)$.   Such kind of control problem is quite different to the usual studied optimal controls for SDEs (see, e.g. \cite{HL90, HS95}) or optimal controls for SDEs with regime-switching (see, e.g. \cite{SSZ,SZ16, ZY03}),
where the control policies are placed directly to the drifts or diffusion coefficients of $(X_t)$. Namely,
the controlled system is also given by
\begin{equation}\label{3.3}
\d \wt X_t=b(\wt X_t,\mu_t)\d t+\sigma(\wt X_t,\mu_t)\d B_t.
\end{equation}
%
Roughly speaking, for $(\wt X_t)$, if we change the value of the control $\mu_t$ at time $t$, then the speed of $\wt X_t$ is immediately modified. Nevertheless, for $(X_t)$ given by \eqref{3.1}, if we change $\mu_t$ at time $t$, we only change the switching rate of the process $(\La_t)$ and the speed of $X_t$ maybe remain the same as before because $\La_t$ may not jump at $t$. This observation tells us that in contrast to the process $(\wt X_t)$, the process $(X_t)$ characterized by \eqref{3.1} and \eqref{3.2} is more closely related to the long time behavior of the control $(\mu_t)$.

Let $\psi$, $w_\mu''(\delta)$ be defined by \eqref{psi} and \eqref{w2} respectively.
\begin{mydefn}\label{def-2}
A $\psi$-relaxed control is a term $\alpha=(\Omega,\F,\F_t,\p,B_t,X_t,\La_t,\mu_t,s,x,\theta)$ such that
\begin{itemize}
  \item[$\mathrm{(1)}$] $(s,x,\theta)\in [0,T]\times \R^d\times\S$;
  \item[$\mathrm{(2)}$] $(\Omega,\F,\p)$ is a probability space with the filtration $\{\F_t\}_{t\in [0,T]}$;
  \item[$\mathrm{(3)}$] $(B_t)$ is a $d$-dimensional standard Brownian motion on $(\Omega,\F,\F_t,\p)$, and $(X_t,\La_t)$ is a stochastic process on $\R^d\times \S$ satisfying \eqref{3.1} and \eqref{3.2} with $X_s=x$, $\La_s=\theta$;
  \item[$\mathrm{(4)}$] $\mu_t\in \pb(U)$ is adapted to the $\sigma$-field generated by
      $\La_t$,   $t\mapsto\!\mu_t$ is in $\mathcal{D}([0,T];\!\pb(U))$ almost surely, and for every $\theta'\in \S$ the curve $t\mapsto\nu_t(\,\cdot,\theta'\,):=\mu_t(\, \cdot\, |\La_t=\theta')$ satisfies
      \[w_{\nu}([t_1,t_2))\leq \psi(t_2-t_1),\quad 0\leq t_1<t_2\leq T;\]
\end{itemize}
\end{mydefn}
The collection of all $\psi$-relaxed controls with initial value $(s,x,\theta)$ is denoted by $\wt \Pi_{s,x,\theta}$.

\begin{myrem}
In Definition \ref{def-2}(4), the control policy $\mu_t$ is assumed to be adapted to the $\sigma$-field generated by $\La_t$ in order to ensure the controlled process $(\La_t)$ remain to be a Markov chain. In realistic application, one may make decision using the information of $X_t$. In that case, we naturally need to assume $\mu_t$ is adapted to the $\sigma$-field generated by $\La_t$ and $X_t$. But, $\La_t$ is no longer a Markov process. 
\end{myrem}

Given two functions $f:[0,T]\times \R^d\times\S\times U\to \R$ and $g:\R^d\times \S\to \R $, the expected cost relative to the control $\alpha\in \wt \Pi_{s,x,\theta}$ is defined by
\begin{equation}\label{3.4}
J(s,x,\theta,\alpha)=\E \Big[\int_s^T f(t,X_t,\La_t,\mu_t)\d t+g(X_T,\La_T)\Big].
\end{equation}
Correspondingly, the value function is defined by
\begin{equation}\label{vfun1}
V(s,x,\theta)=\inf_{ \alpha\in \wt \Pi_{s,x,\theta}} J(s,x,\theta,\alpha)
\end{equation}
for  $s\in[0,T],\, x\in\R^d,\, \theta\in\S$.
A $\psi$-relaxed control $\alpha^\ast\in \wt \Pi_{s,x,\theta}$ is called optimal, if  it holds
\[V(s,x,\theta)=J(s,x,\theta,\alpha^\ast).\]

We assume that the coefficients of \eqref{3.1} satisfy the following conditions.
\begin{itemize}
    \item[(H5)] There exists a constant $C_1>0$ such that
  \[|b(x,\theta)-b(y,\theta)|^2+\|\sigma(x,\theta)-\sigma(y,\theta)\|^2\leq C_1|x-y|^2,\ \ \ x,y\in\R^d,\ \theta\in\S,\]
  where $|x|^2=\sum_{k=1}^d x_k^2$, $\|\sigma\|^2=\mathrm{tr}(\sigma\sigma')$, and $\sigma'$ is the transpose of the matrix $\sigma$.
  \item[(H6)] There exists a constant $C_2>0$ such that
      \[|b(x,\theta)|^2+\|\sigma(x,\theta)\|^2\leq C_2(1+|x|^2),\quad x\in \R^d,\theta\in \S.\]
\end{itemize}
The conditions $\mathrm{(H5)}$ and  $\mathrm{(H6)}$ are classical conditions to ensure the existence and uniqueness of nonexplosive solution of SDE \eqref{1.1}. These conditions can be weakened to include some non-Lipschitz coefficients (cf. e.g. \cite{Sh15c}) or singular coefficients (cf. e.g. \cite{KR05}).

Our second main result of this work is the following theorem.

\begin{mythm}\label{t1}
Assume that (H1)-(H6) hold, and
$f$ and $g$ are lower semi-continuous and bounded from below. Then for every $s\in [0,T)$, $x\in \R^d$, $\theta\in \S$, there exists an optimal $\psi$-relaxed control $\alpha^\ast\in \wt \Pi_{s,x,\theta}$.
\end{mythm}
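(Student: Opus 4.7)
The strategy follows the blueprint of Theorem \ref{tm-1}, the new ingredient being the coupling with the diffusion $(X_t)$. We enlarge the canonical space of Section~2 to
\[
\mathcal Y = C([0,T];\R^d)\times \mathcal D([0,T];\S)\times \mathcal U,
\]
and represent each $\psi$-relaxed control $\alpha\in\wt\Pi_{s,x,\theta}$ by the push-forward $R$ on $\mathcal Y$ of the joint law of $(X,\La,\mu)$, extended to $[0,s]$ by constants. We may assume $V(s,x,\theta)<\infty$ and pick a minimizing sequence $R_n$ with $J(s,x,\theta,R_n)\to V(s,x,\theta)$. Tightness of the $\La$- and $\mu$-marginals $\ll_\La^n$, $\ll_\mu^n$ is obtained verbatim from Step~1 of the proof of Theorem \ref{tm-1}, since the rate structure of $\La$ and the regularity requirement on $\mu$ are unchanged. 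Under (H5)--(H6), Burkholder--Davis--Gundy and Gr\"onwall's lemma yield the standard SDE moment estimates
\[
\E_{R_n}\sup_{t\in[s,T]}|X_t|^{2p}\le C(1+|x|^{2p}),\qquad \E_{R_n}|X_t-X_r|^{2p}\le C|t-r|^p,\quad p\ge 1,
\]
so Kolmogorov's criterion gives tightness of the $X$-marginal in $C([0,T];\R^d)$, hence tightness of $(R_n)$ on $\mathcal Y$. We then extract a weakly convergent subsequence $R_n\Rightarrow R_0$ and invoke Skorokhod's representation to obtain $Y_n=(X^{(n)},\La^{(n)},\mu^{(n)})\to Y_0=(X^{(0)},\La^{(0)},\mu^{(0)})$ almost surely on a common $(\Omega',\F',\p')$.

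\textbf{Identification in the limit.} The Markov-chain side is handled verbatim by Step~2 of Theorem \ref{tm-1}: the series representation from Proposition \ref{tm-0} passes to the limit at continuity points of $R_0$, yielding \eqref{3.2} for $(\La^{(0)},\mu^{(0)})$, and the backward-filtration argument delivers the measurability $\mu^{(0)}_t\in\sigma(\La^{(0)}_t)$ along with the $\psi$-modulus bound. For the diffusion we pass to the limit through the associated martingale problem: for every $\phi\in C_b^2(\R^d)$ the process
\[
M_t^{n,\phi}:=\phi(X_t^{(n)})-\phi(x)-\int_s^t\bigl(\mathcal L_{\La_r^{(n)}}\phi\bigr)(X_r^{(n)})\,\d r,
\]
with $\mathcal L_\theta\phi(x)=\la b(x,\theta),\nabla\phi(x)\raa+\tfrac12\mathrm{tr}\bigl(\sigma\sigma'(x,\theta)\nabla^2\phi(x)\bigr)$, is a martingale with respect to the natural filtration of $Y_n$. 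Uniform convergence $X^{(n)}\to X^{(0)}$ on $[0,T]$, continuity of $b(\cdot,\theta),\sigma(\cdot,\theta)$ from (H5), pointwise convergence $\La^{(n)}\to\La^{(0)}$ outside an at most countable set, and the moment bounds above let the martingale property pass to $M^{0,\phi}$ with respect to the natural filtration of $Y_0$. Taking $\phi(x)=x_i$ and $\phi(x)=x_ix_j$ identifies both the drift and the quadratic variation of $X^{(0)}$, and a standard martingale representation theorem then produces, after a possible enlargement of $\Omega'$, a $d$-dimensional Brownian motion $B^{(0)}$ such that $(X^{(0)},\La^{(0)})$ solves \eqref{3.1}. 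Hence $\alpha^\ast:=(\Omega',\F',\F'_t,\p',B^{(0)},X^{(0)},\La^{(0)},\mu^{(0)},s,x,\theta)$ belongs to $\wt\Pi_{s,x,\theta}$.

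\textbf{Optimality and main obstacle.} Since $T$ is a continuity point of the continuous path $X^{(0)}$ and (as in Section~2) of $\La^{(0)}$, both $X_T^{(n)}\to X_T^{(0)}$ and $\La_T^{(n)}\to\La_T^{(0)}$ almost surely. Lower semi-continuity of $f,g$, the bounded-below assumption, and Fatou's lemma then give
\[
V(s,x,\theta)=\lim_{n\to\infty}J(s,x,\theta,R_n)\ge J(s,x,\theta,\alpha^\ast)\ge V(s,x,\theta),
\]
so $\alpha^\ast$ is optimal. We expect the delicate step to be the construction of the driving Brownian motion $B^{(0)}$: the martingale problem for $X^{(0)}$ does not automatically furnish a Brownian motion adapted to the appropriate filtration on the original $\Omega'$, and the enlargement has to be arranged so that $B^{(0)}$ is independent of the extra randomness carried by $\mu^{(0)}$ beyond what $\La^{(0)}$ already encodes, thereby preserving Definition \ref{def-2}(4) in the limit.
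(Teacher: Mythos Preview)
Your proposal is correct and follows the same compactification strategy as the paper: minimizing sequence, tightness of the three marginals, Skorokhod representation, identification of the limit as an admissible control, and a Fatou argument for optimality. The tightness step and the Markov-chain identification coincide essentially verbatim with the paper's.

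The one substantive difference is how the limit diffusion is identified. The paper simply writes the integral equation $X_t^{(n)}=x+\int_0^t b(X_s^{(n)},\La_s^{(n)})\,\d s+\int_0^t\sigma(X_s^{(n)},\La_s^{(n)})\,\d B_s$ on the Skorokhod space $(\Omega',\F',\p')$ and ``passes $n\to\infty$'' to obtain the same equation for $X^{(0)}$; this is formally delicate, since Skorokhod's representation furnishes $(X^{(n)},\La^{(n)},\mu^{(n)})$ on $\Omega'$ but no common driving Brownian motion, so the paper is tacitly appealing to exactly the construction you spell out. Your route through the martingale problem is the standard and more careful remedy: verify that the limit solves the $(b,\sigma\sigma')$-martingale problem along $\La^{(0)}$, then invoke a representation theorem (Ikeda--Watanabe, or \cite{SV} already in the bibliography) to manufacture $B^{(0)}$ on a possibly enlarged space. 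The ``main obstacle'' you flag is genuine but routine: the enlargement adds only independent auxiliary randomness, so the already-established inclusion $\sigma(\mu_t^{(0)})\subset\sigma(\La_t^{(0)})$ is unaffected and Definition~\ref{def-2}(4) persists; that definition does not demand any further independence between $B^{(0)}$ and $\mu^{(0)}$. One small point: the test functions $\phi(x)=x_i$ and $\phi(x)=x_ix_j$ are not in $C_b^2$, so you should either localize or use your uniform moment bounds to justify extending the martingale identity to them.
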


To simplify the proof, we also transform the relaxed controls into the canonical path space.
Let $\mathcal{U}$ be defined by \eqref{U}, and
\begin{equation}\label{spY}
\mathcal Y= C([0,T];\R^d)\times \mathcal D([0,T];\S)\times \mathcal U,
\end{equation}
endowed with the product topology. Let  $\wt{\mathcal Y}$ be the Borel $\sigma$-field, $\wt {\mathcal{Y}}_t$ the $\sigma$-fields up to time $t$.
Now, the relaxed control $(\Omega,\F,\F_t,\p$, $B_t$, $X_t,\La_t,\mu_t,s, x,\theta)$ can be transformed into a relaxed control in the canonical space $\mathcal Y$ via the map $\Psi:\Omega\ra \mathcal Y$ defined by
\[\Psi(\omega)=(X_t(\omega),\La_t(\omega),\mu_t(\omega))_{t\in [0,T]},\quad X_r:=x,\ \La_r:=\theta,\mu_r:=\mu_s\quad \forall \,r\in [0,s],\]
where $R=\p\circ \Psi^{-1}$ is a probability measure on $\mathcal Y$. In this canonical space, we still use $R$ to represent this relaxed control $(\mathcal Y,\wt{\mathcal Y},\wt{\mathcal Y}_t,R,B_t, X_t,\La_t,\mu_t, s,x,\theta)$.

\noindent\textbf{Proof of Theorem \ref{t1}}
Without loss of generality, we consider the case $V(0,x,\theta)<\infty$. In the canonical space $\mathcal Y$, there exists a sequence of probability measures $R_n$, $n\geq 1$, such that
\begin{equation}\label{j-1}
\lim_{n\ra \infty} J(0,x,\theta,R_n)=V(0,x,\theta)<\infty.
\end{equation}

\textbf{Step 1}. In this step, we aim to prove the tightness of $(R_n)_{n\geq 1}$. Denote by $\ll_X^n$, $\ll_\La^n$ and $\ll_\mu^n$, $n\geq 1$, the distribution of $(X_t)_{t\in [0,T]}$, $(\La_t)_{t\in [0,T]}$ and $(\mu_t)_{t\in [0,T]}$ respectively under $R_n$.

In the same way as the proof of Theorem \ref{tm-1}, we can prove the tightness of $(\ll_\mu^n)_{n\geq 1}$ and $(\ll_\La^n)_{n\geq 1}$. Now, we go to prove the tightness of $(\ll_X^n)$. According to
\cite[Theorem 12.3]{Bill}, it is sufficient to verify the moment condition. By It\^o's formula, for $0\leq t_1<t_2\leq T$,
\begin{equation}\label{j-1.5}
\begin{split}
  &\E_{R_n}|X_{t_2}-X_{t_1}|^4\\
  &\leq 8\E_{R_n}\Big|\int_{t_1}^{t_2}\!b(X_r,\La_r)\d r\Big|^4+8\E_{R_n}\Big|\int_{t_1}^{t_2}\!\sigma(X_r,\La_r)\d B_r\Big|^4\\
  &\leq 8(t_2-t_1)^3\E_{R_n}\int_{t_1}^{t_2}\!|b(X_r,\La_r)|^4\d r+288(t_2-t_1)\E_{R_n}\!\int_{t_1}^{t_2}\!\!\|\sigma(X_r,\La_r)\|^4\d r\\
  &\leq C(t_2-t_1)\!\int_{t_1}^{t_2}\!(1+\E_{R_n}|X_r|^4)\d r.
\end{split}
\end{equation}
The linear growth condition (H6) implies  the existence of a constant $C$ (independent of $n$) such that
$\dis \int_0^T\!\!\!\E_{R_n}\!|X_r|^4\d r\leq C$ (cf. \cite[Theorem 3.20]{MY}). Furthermore, invoking the fact $X_0=x$, we conclude that $(\ll_X^n)_{n\geq 1}$ is tight due to
\cite[Theorem 12.3]{Bill}.

\textbf{Step 2}. Because the marginal distributions of $R_n$, $n\geq 1$ are all tight, we get $R_n$, $n\geq 1$ is tight as well. Up to taking a subsequence, we may assume that $R_n$ weakly converges to some probability measure $R_0$. Since $\mathcal Y$ is a Polish space, we apply Skorokhod's representation theorem (cf. \cite[Chapter 3]{EK}, Theorem 1.8, p.102) to obtain a probability space $(\Omega',\F',\p')$ on which defined a sequence of random variables $(X_t^{(n)},\La_t^{(n)},\mu_t^{(n)})_{t\in [0,T]}$, $n\geq 0$, taking values in $\mathcal Y$ with the distribution $R_n$, $n\geq 0$, respectively, such that $(X_t^{(n)},\La_t^{(n)},\mu_t^{(n)})_{t\in [0,T]}$ converges $\p'$-almost surely to $(X_t^{(0)},\La_t^{(0)},\mu_t^{(0)})_{t\in [0,T]}$ as $n\ra \infty$.

Let $\mathcal{T}_0$ be defined in the same way as the argument of Theorem \ref{tm-1}. For every $t\in \mathcal{T}_0$, we have $(X_t^{(n)}, \La_t^{(n)},\mu_t^{(n)})$ converges almost surely to $(X_t^{(0)},\La_t^{(0)},\mu_t^{(0)})$.
Analogous to the argument of Theorem \ref{tm-1}, $(\La_t^{(0)})$ is a continuous time Markov chain with transition rate operator induced from $(\mu_t^{(0)})$, which also implies that $\mathcal{T}_0=[0,T]$. The fact that $\mu_t^{(0)}$ is adapted to $\sigma(\La_t^{(0)})$ can be proved in the same way as the proof of Theorem \ref{tm-1}.

We need to check that $(X_t^{(0)})$ satisfies SDE \eqref{3.1} under $R_0$ is associated with a $\psi$-relaxed control.
Since $(X_t^{(n)})_{t\in [0,T]}$ are processes in the path space $C([0,T];\R^d)$, every projection map $\pi_t:C([0,T];\R^d)\ra \R^d$, $\pi_t(X_\cdot):=X_t$, is continuous. Then, this yields that
\[X_t^{(n)}\ \text{converges almost surely to $X_t^{(0)}$ for each $t\in [0,T]$ as $n\ra \infty$,}
\] because $(X_t^{(n)}, \La_t^{(n)},\mu_t^{(n)})_{t\in [0,T]}$ converges almost surely to $(X_t^{(0)},\La_t^{(0)},\mu_t^{(0)})_{t\in[0,T]}$.
Furthermore, passing $n$ to $\infty$ in the following integral equation:
\begin{equation}\label{j-2}
X_t^{(n)}=x+\int_0^t b(X_s^{(n)},\La_s^{(n)})\d s+\int_0^t\sigma(X_s^{(n)},\La_s^{(n)})\d B_s.
\end{equation}
we get
\begin{equation}\label{j-3}
X_t^{(0)}=x+\int_0^t b(X_s^{(0)},\La_s^{(0)})\d s+\int_0^t\sigma(X_s^{(0)},\La_s^{(0)})\d B_s,
\end{equation}
which means that $(X_t^{(0)})$ satisfies SDE \eqref{3.1}.

Consequently,  $R_0$ is a $\psi$-relaxed control. By  \eqref{j-1} and the lower semi-continuity of $f$ and $g$,
we have
\begin{align*}
  V(0,x,\theta)&=\lim_{n\ra \infty}\E_{\p'}\Big[\int_0^T f(t,X_t^{(n)},\La_t^{(n)},\mu_t^{(n)})\d t+g(X_T^{(n)},\La_T^{(n)})\Big]\\
  &\geq \E_{\p'}\Big[\int_0^T f(t,X_t^{(0)},\La_t^{(0)},\mu_t^{(0)})\d t+g(X_T^{(0)},\La_T^{(0)})\Big]\\
  &\geq V(0,x,\theta).
\end{align*}
Hence, $R_0$ is an optimal $\psi$-relaxed control.
The proof is complete. \fin

\end{document}